\documentclass[10pt,reqno]{amsart}

\usepackage[a4paper,left=35mm,right=35mm,top=30mm,bottom=30mm,marginpar=25mm]{geometry}

\usepackage{amsmath}
\usepackage{amssymb}
\usepackage{amsthm}
\usepackage[utf8]{inputenc}
\usepackage{eurosym}
\usepackage{graphicx}
\usepackage{hyperref}
\usepackage{dsfont}
\usepackage{dsfont}
\usepackage{xcolor,colortbl}
\usepackage{comment}


\allowdisplaybreaks

\usepackage{hyperref}


\usepackage{ifthen}

\makeindex

\newcommand{\tr}{\operatorname{tr}}

\renewcommand{\div}{\operatorname{div}}

\newcommand{\Rr}{{\mathbb{R}}}

\newcommand{\Ee}{{\mathds{E}}}

\newcommand{\Aa}{{\mathcal{A}}}

\newcommand{\Pp}{{\mathcal{P}}}

\newcommand{\bX}{{\bf X}}

\newcommand{\bb}{{\bf b}}
\newcommand{\bsigma}{{\boldsymbol \sigma}}

\newcommand{\bv}{{\bf v}}

\def\leq{\leqslant}

\numberwithin{equation}{section}

\newtheoremstyle{thmlemcorr}{10pt}{10pt}{\itshape}{}{\bfseries}{.}{10pt}{{\thmname{#1}\thmnumber{
#2}\thmnote{ (#3)}}}
\newtheoremstyle{thmlemcorr*}{10pt}{10pt}{\itshape}{}{\bfseries}{.}\newline{{\thmname{#1}\thmnumber{
#2}\thmnote{ (#3)}}}
\newtheoremstyle{defi}{10pt}{10pt}{\itshape}{}{\bfseries}{.}{10pt}{{\thmname{#1}\thmnumber{
#2}\thmnote{ (#3)}}}
\newtheoremstyle{remexample}{10pt}{10pt}{}{}{\bfseries}{.}{10pt}{{\thmname{#1}\thmnumber{
#2}\thmnote{ (#3)}}}
\newtheoremstyle{ass}{10pt}{10pt}{}{}{\bfseries}{.}{10pt}{{\thmname{#1}\thmnumber{
A#2}\thmnote{ (#3)}}}

\theoremstyle{thmlemcorr}
\newtheorem{theorem}{Theorem}
\numberwithin{theorem}{section}
\newtheorem{lemma}[theorem]{Lemma}

\theoremstyle{thmlemcorr*}
\newtheorem{theorem*}{Theorem}
\newtheorem{lemma*}[theorem]{Lemma}
\newtheorem{corollary*}[theorem]{Corollary}
\newtheorem{proposition*}[theorem]{Proposition}
\newtheorem{problem*}[theorem]{Problem}
\newtheorem{conjecture*}[theorem]{Conjecture}

\theoremstyle{defi}
\newtheorem{definition}[theorem]{Definition}

\newtheorem{problem}{Problem}

\theoremstyle{remexample}
\newtheorem{remark}[theorem]{Remark}

\theoremstyle{ass}

\begin{document}

\title{A Mean Field Game price model with noise}

\author{Diogo Gomes}\thanks{King Abdullah University of Science and Technology (KAUST), CEMSE Division, Thuwal 23955-6900. Saudi Arabia. e-mail: diogo.gomes@kaust.edu.sa.}%
\author{Julian Gutierrez}\thanks{King Abdullah University of Science and Technology (KAUST), CEMSE Division, Thuwal 23955-6900. Saudi Arabia. e-mail: julian.gutierrezpineda@kaust.edu.sa.}%
\author{Ricardo Ribeiro}\thanks{King Abdullah University of Science and Technology (KAUST), CEMSE Division, Thuwal 23955-6900. Saudi Arabia. e-mail: ricardo.ribeiro@kaust.edu.sa.}
	\thanks{Universidade Tecnol\'ogica Federal do Paran\'a (UTFPR), Departamento Acad\^emico de Matem\'atica, Avenida dos Pioneiros 3131, 86036-370 Londrina, PR, Brazil. e-mail: ricardoribeiro@utfpr.edu.br.}%



\keywords{Mean Field Games; Price formation; Common noise}

\thanks{
      The authors were partially supported by KAUST baseline funds and 
 KAUST OSR-CRG2017-3452.
}
\date{\today}

\begin{abstract}

In this paper, we propose a mean-field game model for the price formation of a commodity whose production is subjected to random fluctuations. 
The model generalizes existing deterministic price formation models. 

Agents seek to minimize their average cost by choosing their trading rates with a price that is characterized by a balance between supply and demand.
The supply and the price processes are assumed to follow stochastic differential equations. 

Here, we show that, for linear dynamics and quadratic costs, the optimal trading rates are determined in feedback form. 
Hence, the price arises as the solution to a stochastic differential equation, whose coefficients depend on the solution of a system of ordinary differential equations.

\end{abstract}

\maketitle

\section{Introduction}

Mean-field games (MFG) is a tool to study  the Nash equilibrium of infinite populations of rational agents. These agents select their actions based on their state and the statistical information about the population. Here, we study a price formation model for a commodity traded in a market under uncertain supply, which is a common noise shared by the agents. These agents are rational and aim to minimize the average trading cost by selecting their trading rate. The distribution of the agents solves a stochastic partial differential equation. Finally, a market-clearing condition characterizes the price.

We consider a commodity whose supply process is described by a  stochastic differential equation; that is, we are given a drift 
 $b^S:[0,T]\times \Rr^2\to \Rr$ and volatility $\sigma^S:[0,T]\times \Rr^2\to \Rr^+_0$, 
 which are smooth functions, and the supply $Q_s$ is 
determined by the 
stochastic differential equation 
\begin{equation}\label{eq: supply}
dQ_s = b^S(Q_s,\varpi_s,s)ds+ \sigma^S(Q_s,\varpi_s,s)dW_s \quad \mbox{ in } [0,T]      
\end{equation}
with the initial condition $\bar{q}$. We would like to determine  the drift $b^P:[0,T]\times \Rr^2\to \Rr$, the volatility $\sigma^P:[0,T]\times \Rr^2\to \Rr^+_0$, and $\bar{w}$ such that 
 the price $\varpi_s$ solves 
\begin{equation}\label{eq: price}
d\varpi_s=b^P(Q_s,\varpi_s,s)ds+\sigma^P(Q_s,\varpi_s,s)dW_s \quad \mbox{ in } [0,T]
\end{equation}
with initial condition $\bar{w}$ and such that it ensures a market clearing condition. It may not be possible to find $b^P$ and $\sigma^P$ in a feedback form. However, for linear dynamics, as we show here, we can solve quadratic models, which are of great interest in applications.

Let $\bX_s$ be the quantity of the commodity held by an agent at time $s$ for $t \leq s \leq T$. 
This agent  trades this commodity, controlling its rate of change, $\bv$, thus
\begin{equation}\label{eq: commodity quantity}
d\bX_s = \bv(s) ds ~~\mbox{ in } [t,T].
\end{equation}
At time $t$, an agent who holds $x$ and observes $q$ and $w$ chooses a 
progressively measurable control process $\bv$ to minimize 
the expected cost functional 
\begin{equation}\label{eq:functional}
J(x,q,w,t;\bv) = \Ee \left[ \int_t^T L(\bX_s,\bv(s))+\varpi_s \bv(s) ds + \Psi \left(\bX_ T,Q_T,\varpi_T\right)\right],
\end{equation}
subject to the dynamics \eqref{eq: supply}, \eqref{eq: price}, and \eqref{eq: commodity quantity}
with initial condition $\bX_t=x$, 
 and the expectation is taken w.r.t. the standard filtration generated by the Brownian motion. The Lagrangian, $L$, takes into account costs such as market impact or storage, and the terminal cost $\Psi$ stands for the terminal preferences of the agent.

%
%
%
%
This control problem determines a Hamilton-Jacobi equation addressed in Section \ref{hjevt}. In turn, each agent selects an optimal control and uses it to adjust its holdings. Because the source of noise in $Q_t$ is common to all agents, the evolution of the  probability distribution of agents is not deterministic. Instead, it is given by a stochastic transport equation derived in Section \ref{stochastic transport}. Finally, the price is determined by a market-clearing condition that ensures that supply meets demand. We study this condition in Section \ref{balance condition section}. 

Mathematically, the price model corresponds to the following problem.

\begin{problem}\label{problem: problem}

        Given a Hamiltonian, $H: \Rr^2\to \Rr$, $H\in C^\infty$, 
        a commotity's supply initial value, $\bar q\in \Rr$, supply drift, $b^S:\Rr^2\times [0,T]\to \Rr$,  and supply volatility, $\sigma^S:\Rr^2\times [0,T]\to \Rr$, 
          a terminal cost, $\Psi:\Rr^3\to \Rr$, $\Psi\in C^\infty(\Rr^3)$, and an initial 
           distribution of agents, $\bar m\in C^\infty_c(\Rr)\cap \Pp(\Rr)$,  
        find $u:\Rr^3\times [0,T]\to \Rr$, $\mu\in C([0,T],\Pp(\Rr^3))$, $\bar w\in\Rr$, the price at $t=0$, the price drift
         $b^P:\Rr^2\times[0,T] \to \Rr$, and the price volatility $\sigma^P:\Rr^2\times [0,T]\to \Rr$ solving
        \begin{equation}\label{eq: problem}
                \begin{cases}
                -u_t+H(x,w+u_x)=b^S u_q +b^P u_w +\tfrac{1}{2}(\sigma^S)^2u_{qq}+\sigma^S\sigma^P u_{qw}+\tfrac{1}{2}(\sigma^P)^2u_{ww}\\
                d \mu_t = \left(\left(\mu\tfrac{(\sigma^S)^2 }{2}\right)_{qq}+(\mu\sigma^S\sigma^P )_{qw}+\left(\mu\tfrac{(\sigma^P)^2 }{2}\right)_{ww}-\div (\mu\bb )\right) dt-\div(\mu\bsigma) dW_t\\
                \int_{\Rr^3}q+D_pH(x,w+u_x(x,q,w,t))\mu_t(dx\times dq\times dw)=0,
                \end{cases}
        \end{equation}
        and the terminal-initial conditions
\begin{equation}\label{eq: terminal-initial conditions}
        \begin{cases}
                u(x,q,w,T)=\Psi(x,q,w)\\
                \mu_0= \bar m \times \delta_{\bar q} \times \delta_{\bar w},           
        \end{cases}
\end{equation}
        where $\bb=(-D_pH(x,w+u_x),b^S,b^P)$, $\bsigma=(0,\sigma^S,\sigma^P)$, and the divergence is taken w.r.t. $(x,q,w)$.
\end{problem}

Given a solution to the preceding problem, we construct the supply and price processes 
\[
Q_t=\int_{\Rr^3} q ~ \mu_t(dx\times dq\times dw)
\]
and 
\[
\varpi_t=\int_{\Rr^3} w ~ \mu_t(dx\times dq\times dw),
\]
which also solve
\[
\begin{cases}
dQ_t&=b^S(Q_t,\varpi_t,t)dt+ \sigma^S(Q_t,\varpi_t,t)dW_t \quad \mbox{ in } [0,T]      \\
d\varpi_t&=b^P(Q_t,\varpi_t,t)dt+\sigma^P(Q_t,\varpi_t,t)dW_t \quad \mbox{ in } [0,T]
\end{cases}
\]
with initial conditions
\begin{equation}\label{eq: Balance first appearence}
        \begin{cases}
                Q_0 = \bar{q}\\
                \varpi_0 = \bar{w}
        \end{cases}
\end{equation}
and satisfy the market-clearing condition
\[Q_t=\int_{\Rr} -D_pH(x, \varpi_t + u_x(x,Q_t,\varpi_t,t)) \mu_t(dx).\]

In \cite{gomes2018mean}, the authors presented a model where the supply for the commodity was a given deterministic function, and the balance condition between supply and demand gave rise to the price as a Lagrange multiplier. Price formation models were also studied by Markowich et al. \cite{MR2573148}, Caffarelli et al. \cite{MR2817378}, and Burger et al. \cite{MR3078202}. 
The behavior of rational agents
that control an electric load  
was considered 
in \cite{MR1029068, MR933047}.  
For example, 
turning on or off space heaters controls the electric load
as was discussed in 
\cite{Kizilkale2014829, Kizilkale20141867,Kizilkale20143493}. 
Previous authors addressed price formation when 
the demand is a given function of the price \cite{GLL10}
or that the price is a function of the demand, see, for example
\cite{2011arXiv1110.1732C}, \cite{2017arXiv171008991C}, 
\cite{de2016distributed}, \cite{PaolaTrovatoAngeli2019}, 
and \cite{2017arXiv170707853J}. 
An $N$-player version of an economic growth model was 
presented in \cite{Gomes20164693}.

Noise in the supply together with a balance condition is a central issue
in price formation 
that could not be handled directly with the techniques in previous papers. A probabilistic approach of the common noise is discussed in Carmona et al. in  \cite{carmona2016}. Another approach is through the master equation, involving derivatives with respect to measures, which can be found in \cite{carmona2014master}. None of these references, however, addresses problems with integral constraints such as \eqref{eq: Balance first appearence}.

Our model corresponds to the one in \cite{gomes2018mean}  for the deterministic setting when we take the volatility for the supply to be 0. Here, we study the linear-quadratic case, that is, when the cost functional is quadratic, and the dynamics \eqref{eq: supply} and \eqref{eq: price} are linear. In Section \ref{subsection: quadratic solutions}, we provide a constructive approach to get semi-explicit solutions of price models for linear dynamics and quadratic cost. This approach avoids the use of the master equation. The paper ends with a brief presentation of simulation results in Section \ref{section: simulation}.


\section{The Model}
In this section, we derive Problem \ref{problem: problem} from the price model. We begin with standard tools of optimal control theory. Then, we derive the stochastic transport equation, and we end by introducing the market-clearing (balance) condition.
\subsection{Hamilton-Jacobi equation and verification theorem}
\label{hjevt}

The \emph{value function} for an agent who at time $t$ holds an amount $x$ of the commodity, whose instantaneous supply and price are $q$ and $w$,  is
\begin{equation}\label{eq:value}
u(x,q,w,t)= \inf_{\bv} J(x,q,w,t;\bv)
\end{equation}
where $J$ is given by \eqref{eq:functional} and the infimum is taken over the set $\Aa\left((t,T]\right)$ of  all progressively  measurable functions $v:[t,T]\to \Rr$. 
Consider the Hamiltonian, $H$, which is the Legendre transform of $L$; that is, for $p\in \Rr$,
\begin{equation}\label{eq: Hamiltonian}
H(x,p)=\sup_{v\in \Rr} [-pv-L (x,v)].
\end{equation}

Then, from standard stochastic optimal control theory, whenever $L$ is strictly convex,  if $u$ is $C^2$, it solves the Hamilton-Jacobi equation in $\Rr^3 \times [0,T)$
\begin{equation}\label{eq:hamilton-jacobi}
-u_t+H(x,w+u_x) -b^Pu_w -b^S u_q - \tfrac{(\sigma^P)^2}{2}u_{ww}- \tfrac{(\sigma^S)^2}{2}u_{qq}- \sigma^P\sigma^Su_{wq}=0
\end{equation}
with the terminal condition 
\begin{equation}\label{eq: final condition}
        u(x,q,w,T)= 
        \Psi \left(x,q,w\right).
\end{equation}
Moreover, as the next verification theorem establishes, any $C^2$ solution 
of \eqref{eq:hamilton-jacobi} is the value function.
\begin{theorem}[Verification]\label{Thm: Verification}
Let $\tilde u:[0,T]\times \Rr^3 \to \Rr$ be a smooth solution of 
\eqref{eq:hamilton-jacobi} with terminal condition \eqref{eq: final condition}. 
Let $(\bX^*,Q,\varpi)$ solve \eqref{eq: commodity quantity}, \eqref{eq: supply} and \eqref{eq: price}, where $\bX^*$ is driven by the progressively measurable control 
\begin{equation*}
       \bv^*(s):=-D_pH(\bX^*_s,\varpi_s+ \tilde{u}_x (\bX^*_s,Q_s,\varpi_s,s)).
\end{equation*}

Then
\begin{enumerate}
        \item $\bv^*$ is an optimal control for \eqref{eq:value} 
        \item $\tilde u=u$, the value function.
\end{enumerate}

\end{theorem}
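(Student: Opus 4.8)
The plan is to run the classical verification argument adapted to this three-dimensional state: apply It\^o's formula to $\tilde u$ along an arbitrary controlled trajectory, rewrite the resulting generator by means of the Hamilton--Jacobi equation \eqref{eq:hamilton-jacobi}, and then exploit the Legendre duality \eqref{eq: Hamiltonian} to compare the outcome with the cost functional \eqref{eq:functional}. First I would fix an arbitrary admissible control $\bv\in\Aa((t,T])$ and let $(\bX,Q,\varpi)$ be the associated state started at $(x,q,w)$ at time $t$, with $\bX$ solving \eqref{eq: commodity quantity} and $(Q,\varpi)$ solving \eqref{eq: supply}--\eqref{eq: price}. Applying It\^o's formula to $s\mapsto \tilde u(\bX_s,Q_s,\varpi_s,s)$ and using that $\bX$ has finite variation (so it contributes neither quadratic nor cross variation, while $d\langle Q\rangle_s=(\sigma^S)^2\,ds$, $d\langle\varpi\rangle_s=(\sigma^P)^2\,ds$, and $d\langle Q,\varpi\rangle_s=\sigma^S\sigma^P\,ds$), the drift of $d\tilde u$ is exactly
\[
\tilde u_t+\tilde u_x\,\bv+ b^S\tilde u_q+b^P\tilde u_w+\tfrac{(\sigma^S)^2}{2}\tilde u_{qq}+\tfrac{(\sigma^P)^2}{2}\tilde u_{ww}+\sigma^S\sigma^P\tilde u_{qw},
\]
and the martingale part is $(\sigma^S\tilde u_q+\sigma^P\tilde u_w)\,dW_s$.

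Next I would integrate from $t$ to $T$, take expectations to discard the stochastic integral, and use the terminal condition \eqref{eq: final condition}. Substituting \eqref{eq:hamilton-jacobi}, which along the trajectory reads $\tilde u_t+ b^S\tilde u_q+b^P\tilde u_w+\tfrac{(\sigma^S)^2}{2}\tilde u_{qq}+\tfrac{(\sigma^P)^2}{2}\tilde u_{ww}+\sigma^S\sigma^P\tilde u_{qw}=H(\bX_s,\varpi_s+\tilde u_x)$, one arrives at
\[
 J(x,q,w,t;\bv)-\tilde u(x,q,w,t)=\Ee\!\left[\int_t^T\Big(L(\bX_s,\bv)+\varpi_s\bv+\tilde u_x\,\bv+H(\bX_s,\varpi_s+\tilde u_x)\Big)\,ds\right].
\]
By the Fenchel--Young inequality coming from \eqref{eq: Hamiltonian} with $p=\varpi_s+\tilde u_x$, the integrand is nonnegative, so $J(x,q,w,t;\bv)\ge\tilde u(x,q,w,t)$ for every $\bv$, and taking the infimum gives $u\ge\tilde u$.

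Finally, to obtain equality and optimality I would use that, $L$ being strictly convex, the supremum in \eqref{eq: Hamiltonian} is attained at the unique maximizer $v=-D_pH(x,p)$ (the envelope relation). Hence along the feedback trajectory driven by $\bv^*(s)=-D_pH(\bX^*_s,\varpi_s+\tilde u_x(\bX^*_s,Q_s,\varpi_s,s))$ the integrand above vanishes identically, so $J(x,q,w,t;\bv^*)=\tilde u(x,q,w,t)$. Combined with $u\ge\tilde u$ and the trivial bound $u\le J(\cdot;\bv^*)$, this forces $\tilde u=u$ and shows that $\bv^*$ realizes the infimum in \eqref{eq:value}, establishing both assertions.

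I expect the main obstacle to be the rigorous justification that the martingale $\int_t^{\cdot}(\sigma^S\tilde u_q+\sigma^P\tilde u_w)\,dW_r$ has zero expectation: since $\tilde u$ is smooth but the state space $\Rr^3$ is unbounded, this is not automatic and should be handled by localizing with stopping times $\tau_n=\inf\{s\ge t:\ |(\bX_s,Q_s,\varpi_s)|\ge n\}\wedge T$, applying It\^o up to $\tau_n$, and passing to the limit $n\to\infty$ via growth and integrability bounds on $\tilde u$, its derivatives, the coefficients $b^S,b^P,\sigma^S,\sigma^P$, and the control, together with dominated convergence. A secondary technical point is to impose admissibility and integrability conditions guaranteeing that $J(x,q,w,t;\bv)$ and the displayed identity are well defined and finite.
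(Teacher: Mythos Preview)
The paper does not actually supply a proof of this theorem; it is stated without proof as a standard result from stochastic optimal control. Your proposal is precisely the classical verification argument one would expect here (It\^o's formula along an arbitrary controlled trajectory, substitution of the HJB equation, and the Fenchel--Young inequality from the Legendre duality \eqref{eq: Hamiltonian}), and it is correct, including your identification of the localization/integrability issue as the only nontrivial technical point.
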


\subsection{Stochastic transport equation}\label{stochastic transport}
Theorem \ref{Thm: Verification} provides an optimal feedback  strategy. As usual in MFG, we assume that the agents are rational and, hence, choose to follow this optimal strategy. This behavior gives rise to a flow that transports the agents and induces a random measure that encodes their distribution. Here, we derive a stochastic PDE solved by this random measure. To this end, let $u$ solve 
\eqref{eq:hamilton-jacobi} and consider the random flow associated with the diffusion
\begin{equation}\label{eq:diffusion}
\begin{cases}
        d\bX_s=-D_pH(\bX_s,\varpi_s+u_x(\bX_s,Q_s,\varpi_s,s)) ds\\
        dQ_s=b^S(Q_s,\varpi_s,s)ds+ \sigma^S(Q_s,\varpi_s,s)dW_s \\
        d\varpi_s=b^P(Q_s,\varpi_s,s)ds+\sigma^P(Q_s,\varpi_s,s)dW_s
\end{cases}
\end{equation}
with initial conditions
\begin{equation*}
\begin{cases}
        \bX_0=x\\
        Q_0=\bar{q}\\
        \varpi_0=\bar{w}.
\end{cases}
\end{equation*}
That is, for a given realization $\omega$ of the common noise, the flow maps the initial conditions $(x,\bar{q},\bar{w})$ to the solution of \eqref{eq:diffusion} at time $t$, which we denote by $\left(\bX_t^\omega (x,\bar{q},\bar{w}),Q_t^\omega (\bar{q},\bar{w}),\varpi_t^\omega (\bar{q},\bar{w})\right)$. Using this map, we define a measure-valued stochastic process $\mu_t$ as follows:

\begin{definition}\label{def:random measure}
Let $\omega$ denote a realization of the common noise $W$ on $0\leq s \leq T$. Given a measure $\bar{m}\in\Pp(\Rr)$ and initial conditions $\bar{q},\bar{w}\in\Rr$ take $\bar \mu \in \Pp(\Rr^3)$ by $\bar \mu=\bar{m}\times \delta_{\bar{q}}\times \delta_{\bar{w}}$ and define a random measure $\mu_t$ by the mapping $\omega \mapsto \mu^\omega_t \in \Pp(\Rr^3)$, where $\mu_t^\omega$ is characterized as follows:
for any bounded  and continuous function $\psi:\Rr^3\to \Rr$
\begin{align*}
&\int_{\Rr^3} \psi(x,q,w) \mu_t^\omega(dx\times dq\times dw) 
\\
&= \int_{\Rr^3} \psi \left(\bX_t^\omega (x,q,w), Q_t^\omega(q,w), \varpi_t^\omega(q,w) \right)\bar{\mu}(dx\times dq \times dw).
\end{align*}
\end{definition}

\begin{remark} 
Because $\bar \mu=\bar{m}\times \delta_{\bar{q}}\times \delta_{\bar{w}}$, we have
\begin{align*}
&\int_{\Rr^3} \psi \left(\bX_t^\omega (x,q,w), Q_t^\omega(q,w), \varpi_t^\omega(q,w) \right)\bar{\mu}(dx\times dq \times dw) 
\\
&= \int_{\Rr} \psi \left(\bX_t^\omega (x,\bar{q},\bar{w}), Q_t^\omega(\bar{q},\bar{w}), \varpi_t^\omega(\bar{q},\bar{w}) \right)\bar{m}(dx).
\end{align*}
Moreover, due to the structure of \eqref{eq:diffusion},
\[\mu_t^\omega=(\bX_t^\omega(x,\bar{q},\bar{w}) \# \bar{m})\times \delta_{Q_t^\omega(\bar{q},\bar{w})}\times \delta_{\varpi^\omega_t(\bar{q},\bar{w})}.\]
\end{remark}




\begin{definition} Let $\bar{\mu}\in \Pp(\Rr^3)$ and write 
\begin{align*}
\bb(x,q,w,s)&=(-D_pH(x,w+u_x(x,q,w,s)),b^S(q,w,s),b^P(q,w,s)), 
\\
\bsigma(q,w,s)&=(0,\sigma^S(q,w,s),\sigma^P(q,w,s)).
\end{align*}
A measure-valued stochastic process $\mu=\mu(\cdot,t)=\mu_t(\cdot)$ is a \emph{weak solution} of the stochastic PDE
        \begin{equation}\label{eq:SDEmeasure}
        d\mu_t =\left(-\div (\mu\bb )+\left(\mu\tfrac{(\sigma^S)^2 }{2}\right)_{qq}+(\mu\sigma^S\sigma^P )_{qw}+\left(\mu\tfrac{(\sigma^P)^2 }{2}\right)_{ww}\right) dt-\div(\mu\bsigma) dW_t,
        \end{equation}
with initial condition $\bar{\mu}$ if for any bounded smooth test function $\psi:\Rr^3\times[0,T]\to \Rr$ 
\begin{align}\label{eq: measure weak solution definition}
                &\int_{\Rr^3} \psi(x,q,w,t)\mu_t(dx\times dq \times dw)=\int_{\Rr^3} \psi(x,q,w,0)\bar{\mu}(dx\times dq \times dw)
                \\
                &+\int_0^t\int_{\Rr^3}\partial_t\psi+ D\psi \cdot \bb +\tfrac{1}{2}\tr \left(\bsigma^T \bsigma D^2\psi \right)\mu_s(dx \times dq \times dw) ds
                \\ 
                & + \int_0^t\int_{\Rr^3} D\psi \cdot \bsigma \mu_s(dx \times dq \times dw) dW_s,
        \end{align}
where the arguments for $\bb$, $\bsigma$ and $\psi$ are $(x,q,w,s)$ and the differential operators $D$ and $D^2$ are taken w.r.t. the spatial variables $x,q, w$.
\end{definition}

\begin{theorem}\label{theo:transport stochastic equation}
        Let $\bar{m}\in\Pp(\Rr)$ and $\bar{q},\bar{w}\in\Rr$. The random measure from Definition \ref{def:random measure} is a weak solution of the stochastic partial differential equation \eqref{eq:SDEmeasure}  with initial condition $\bar{\mu}=\bar{m}\times \delta_{\bar{q}}\times \delta_{\bar{w}}$.
\end{theorem}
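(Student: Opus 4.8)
The plan is to recover the weak formulation \eqref{eq: measure weak solution definition} by applying It\^o's formula along the characteristic flow \eqref{eq:diffusion} and then integrating against the initial measure $\bar\mu=\bar m\times\delta_{\bar q}\times\delta_{\bar w}$. Fix a bounded smooth test function $\psi:\Rr^3\times[0,T]\to\Rr$. For a fixed realization $\omega$ of the noise and a fixed initial point $(x,q,w)$, consider the real-valued process
\[
s\mapsto \psi\!\left(\bX^\omega_s(x,q,w),Q^\omega_s(q,w),\varpi^\omega_s(q,w),s\right).
\]
The coefficients of \eqref{eq:diffusion} are smooth, since the drift involves $D_pH$ and $u_x$, which are smooth because $H\in C^\infty$ and $u$ is a smooth solution of \eqref{eq:hamilton-jacobi}; hence standard SDE theory furnishes a well-defined flow $(\bX^\omega_s,Q^\omega_s,\varpi^\omega_s)$, jointly measurable and continuous in the initial data, and It\^o's formula applies to the process above.

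Because the $\bX$-component of \eqref{eq:diffusion} carries no Brownian term, the only contributions to the quadratic variation come from $Q$ and $\varpi$, whose diffusion coefficients are the nonzero entries of $\bsigma=(0,\sigma^S,\sigma^P)$. Thus It\^o's formula gives
\begin{align*}
\psi\!\left(\bX^\omega_t,Q^\omega_t,\varpi^\omega_t,t\right)
&=\psi(x,q,w,0)
+\int_0^t\Bigl(\partial_t\psi+D\psi\cdot\bb+\tfrac12\tr\!\bigl(\bsigma^T\bsigma D^2\psi\bigr)\Bigr)\,ds\\
&\quad+\int_0^t D\psi\cdot\bsigma\,dW_s,
\end{align*}
where every integrand is evaluated along the flow at the running point $(\bX^\omega_s,Q^\omega_s,\varpi^\omega_s,s)$. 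The $ds$-integrand is exactly the generator appearing in \eqref{eq: measure weak solution definition}, and the $dW_s$-integrand is exactly its martingale term.

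Next, I would integrate this identity with respect to $\bar\mu(dx\times dq\times dw)$. By Definition \ref{def:random measure} the left-hand side becomes $\int_{\Rr^3}\psi(\cdot,t)\,\mu^\omega_t$ and the first term on the right becomes $\int_{\Rr^3}\psi(\cdot,0)\,\bar\mu$. For the two time integrals I would interchange $\int_{\Rr^3}\bar\mu$ with $\int_0^t\,ds$ via Fubini--Tonelli, and with $\int_0^t\,dW_s$ via the stochastic Fubini theorem. After the interchange, applying the pushforward identity of Definition \ref{def:random measure} once more, now to the integrands $\partial_t\psi+D\psi\cdot\bb+\tfrac12\tr(\bsigma^T\bsigma D^2\psi)$ and $D\psi\cdot\bsigma$, rewrites both time integrals as integrals against $\mu_s$, which is precisely \eqref{eq: measure weak solution definition}.

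The main obstacle is the justification of the stochastic Fubini step, which demands joint measurability of $(\omega,s,x,q,w)\mapsto(D\psi\cdot\bsigma)$ along the flow together with an integrability bound meeting the hypotheses of the stochastic Fubini theorem. Here the product structure $\mu^\omega_s=(\bX^\omega_s\#\bar m)\times\delta_{Q^\omega_s}\times\delta_{\varpi^\omega_s}$ is decisive: since $\bar m\in C_c^\infty(\Rr)$ has compact support, the $q,w$ arguments are frozen at the single random values $Q^\omega_s,\varpi^\omega_s$, and $D\psi$ is bounded, so the relevant integrand is bounded on $\supp\bar m$ uniformly along the flow over $[0,T]$; combined with the continuity of the flow in the initial data, this yields the square-integrability needed to apply the stochastic Fubini theorem. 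The deterministic Fubini step and the measurability of the flow follow from the same smoothness and compact-support considerations. \cqd
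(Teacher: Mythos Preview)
Your proposal is correct and follows essentially the same route as the paper: apply It\^o's formula along the characteristic flow \eqref{eq:diffusion}, integrate against the initial measure, and use the pushforward identity of Definition~\ref{def:random measure} to rewrite the result in terms of $\mu_s$. The only difference is cosmetic---the paper applies It\^o directly to the $\bar m$-integrated process $s\mapsto\int_\Rr\psi(\bX_s,Q_s,\varpi_s,s)\,\bar m(dx)$ and thereby leaves the stochastic Fubini step implicit, whereas you apply It\^o pointwise and then make the interchange explicit; your added justification of that interchange is a welcome bit of rigor that the paper omits.
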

\begin{proof}
        Let $\psi:\Rr^3\times [0,T]\to \Rr$ be a bounded smooth test function.  Consider the stochastic process $s \mapsto \int_{\Rr^3}\psi(x,q,w,s)\mu^\omega_s(dx\times dq \times dw)$. Let 
        \[(\bX_t(x,\bar{q},\bar{w}),Q_t(\bar{q},\bar{w}),\varpi_t(\bar{q},\bar{w}))\]
        be the flow induced by \eqref{eq:diffusion}. By the definition of $\mu^\omega_t$,
      \begin{align*}
      &\int_{\Rr^3}\psi(x,q,w,t)\mu_t^\omega(dx\times dq\times dw)-\int_{\Rr^3}\psi(x,q,w,0)\bar{\mu}(dx\times dq\times dw)
        \\
     &=\int_{\Rr} \left[\psi(\bX^\omega_t(x,\bar{q},\bar{w}),Q^\omega_t(\bar{q},\bar{w}),\varpi^\omega_t(\bar{q},\bar{w}),t)-\psi(x,\bar{q},\bar{w},0)\right]\bar{m} (dx).
      \end{align*}
Then, applying Ito's formula to the stochastic process 
\[s\mapsto \int_{\Rr} \psi(\bX_s(x,\bar{q},\bar{w}),Q_s(\bar{q},\bar{w}),\varpi_s(\bar{q},\bar{w}),s)\bar{m}(dx),\] the preceding expression becomes
      \begin{align*}
&\int_0^t d \Big( \int_{\Rr} \psi(\bX_s(x,\bar{q},\bar{w}),Q_s(\bar{q},\bar{w}),\varpi_s(\bar{q},\bar{w}),s)\bar{m}(dx) \Big) ds
        \\
		&=\int_0^t \int_{\Rr}\left[D_t\psi+D\psi \cdot\bb+   
        \tfrac{1}{2}\tr (\bsigma^T\bsigma D^2\psi )\right]\bar{m}(dx)ds 
        \\
        &+ \int_0^t \int_{\Rr} D\psi\cdot \bsigma\bar{m}(dx) dW_s 
        \\        
        &=\int_0^t\int_{\Rr^3}\left[D_t\psi+ D\psi \cdot\bb + \tfrac{1}{2}\tr (\bsigma^T \bsigma D^2\psi)
                \right]\mu_s(dx\times dq\times dw)ds
		\\
        &+\int_0^t\int_{\Rr^3} D\psi\cdot \bsigma \mu_s(dx\times dq\times dw) dW_s,
      \end{align*}
where arguments of $\bb$, $\bsigma$ and the partial derivatives of $\psi$ in the integral with respect to $\bar{m}(dx)$ are $(\bX_s(x,\bar{q},\bar{w}),Q_s(\bar{q},\bar{w}),\varpi_s(\bar{q},\bar{w}),s)$, and in the integral with respect to $\mu_t(dx\times dq \times dw)$ are $(x,q,w,t)$.  Therefore, 
      \begin{align*} 
            &\int_{\Rr^3}\psi(x,q,w,t)\mu_t^\omega(dx\times dq\times dw)-\int_{\Rr^3}\psi(x,q,w,0)\bar{\mu}(dx\times dq\times dw)
            \\
        &=\int_0^t\int_{\Rr^3}\left[D_t\psi+
                D\psi \cdot\bb +   
                \tfrac{1}{2}\tr (D^2\psi:(\bsigma,\bsigma))
                \right]\mu^\omega_s(dx\times dq\times dw)ds\\
        &+\int_0^t\int_{\Rr^3} 
        D\psi\cdot \bsigma \mu^\omega_s(dx\times dq\times dw) dW_s.
      \end{align*} 
Hence, \eqref{eq: measure weak solution definition} holds.
\end{proof}

%

\subsection{Balance condition}\label{balance condition section}
The balance condition requires the average trading rate to be equal to the supply. Because agents are rational and, thus, use their optimal strategy, this condition takes the form
\begin{align}\label{eq: balance condition}
Q_t&=\int_{\Rr^3} -D_pH(x, w + u_x(x,q,w,t)) \mu^\omega_t(dx\times dq \times dw),
\end{align}
where $\mu_t^\omega$ is given by Definition \ref{def:random measure}. Because $Q_t$ satisfies a stochastic differential equation, the previous can also be read in differential form as
\begin{align}\label{eq: balance condition}
b^S(Q_t,\varpi_t,t)dt+ \sigma^S(Q_t,\varpi_t,t)dW_t = d\int_{\Rr^3} -D_pH(x, w + u_x(x,q,w,t)) \mu^\omega_t(dx\times dq \times dw).
\end{align}
The former condition determines $b^P$ and $\sigma^P$. In general, $b^P$ and $\sigma^P$ are only progressively measurable and not in feedback form. In this case, 
the Hamilton--Jacobi  \eqref{eq:hamilton-jacobi} must be replaced by either a 
stochastic partial differential equation or the problem must be modeled by the master equation. However, as we discuss next, in the linear-quadratic case, we can 
find $b^P$ and $\sigma^P$ in feedback form.

\section{Potential-free Linear-quadratic price model}

Here, we consider a price model for linear dynamics and quadratic cost. The Hamilton-Jacobi equation admits quadratic solutions. Then, the balance equation determines the dynamics of the price, and the model is reduced to a first-order system of ODE.\\
Suppose that $L(x,v)= \tfrac{c}{2}v^2$ and, thus, $H(x,p)=\tfrac{1}{2c}p^2$. Accordingly, the corresponding MFG model is 
\begin{equation}\label{eq: quadratic hamiltonian mfg no potential}
\begin{cases} 
-u_t + \tfrac{1}{2c}(w+u_x)^2 -b^Pu_w -b^S u_q - \tfrac{1}{2}(\sigma^P)^2u_{ww}- \tfrac{1}{2}(\sigma^S)^2u_{qq}- \sigma^P\sigma^Su_{wq}=0
\\
d \mu_t = \left(\left(\mu\tfrac{(\sigma^S)^2 }{2}\right)_{qq}+(\mu\sigma^S\sigma^P )_{qw}+\left(\mu\tfrac{(\sigma^P)^2 }{2}\right)_{ww}-\div (\mu\bb )\right) dt-\div(\mu\bsigma) dW_t
\\
Q_t
=-\tfrac{1}{c} \varpi_t +\int_{\Rr} -\tfrac{1}{c} u_x(x,q,w,t) \mu_t^\omega(dx\times dq \times dw).
\end{cases}
\end{equation}
Assume further that $\Psi$ is quadratic; that is,
\[
\Psi(x,q,w)=c_0+c_1^1x+c_1^2q + c_1^3w + c_2^1x^2 + c_2^2xq + c_2^3xw + c_2^4q^2 + c_2^5qw + c_2^6w^2.
\] 
\subsection{Balance condition}
Let
\[\Pi_t=\int_{\Rr^3} u_x(x,q,w,t) \mu_t(dx\times dq\times dw).\]
The balance condition  is $Q_t=-\tfrac{1}{c} \left(\varpi_t+\Pi_t\right)$. Furthermore, Definition \ref{def:random measure}  provides the identity
\[\Pi_t=\int_{\Rr} u_x(\bX_t^*(x,\bar{q},\bar{w}),Q_t(\bar{q},\bar{w}),\varpi_t(\bar{q},\bar{w}),t) \bar{m}(dx).\]
\begin{lemma}\label{lem: balance}
	Let $(\bX^*,Q,\varpi)$ solve \eqref{eq: commodity quantity}, \eqref{eq: supply} and \eqref{eq: price} with $\bv=\bv^*$, the optimal control, and initial conditions $\bar{q},\bar{w}\in\Rr$. Let $u\in C^3(\Rr^3\times[0,T])$ solve the Hamilton-Jacobi equation \eqref{eq:hamilton-jacobi}. Then
\begin{align}\label{eq: Balance condition auxiliary term}
d\Pi_t=&
\int_{\Rr} \left(u_{xq} \sigma^S + u_{xw} \sigma^P  \right)\bar{m}(dx)dW_t,
\end{align}
where the arguments for the partial derivatives of $u$ are $(\bX_t^*(x,\bar{q},\bar{w}),Q_t(\bar{q},\bar{w}),\varpi_t(\bar{q},\bar{w}),t)$.
\end{lemma}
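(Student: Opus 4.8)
The plan is to compute $d\Pi_t$ by applying It\^o's formula to the process $s \mapsto u_x(\bX_s^*, Q_s, \varpi_s, s)$ and then integrating against $\bar m$, exploiting the fact that the drift terms will cancel because $u$ solves the Hamilton--Jacobi equation \eqref{eq:hamilton-jacobi}. Concretely, I would first differentiate \eqref{eq:hamilton-jacobi} with respect to $x$ to obtain a PDE satisfied by $u_x$. Since $H(x,p) = \tfrac{1}{2c}p^2$ in the potential-free case, this differentiation is clean: the term $H(x, w+u_x)$ differentiates to $\tfrac{1}{c}(w+u_x)u_{xx} = -\bv^* u_{xx}$ along the optimal trajectory, which is exactly the transport term one needs.

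Next I would write It\^o's formula for $v(s) := u_x(\bX_s^*, Q_s, \varpi_s, s)$. Using the dynamics \eqref{eq:diffusion}, namely $d\bX_s^* = -D_pH(\bX_s^*, \varpi_s + u_x)\,ds = \bv^*\,ds$ (no Brownian term in the $\bX$ equation), $dQ_s = b^S\,ds + \sigma^S\,dW_s$, and $d\varpi_s = b^P\,ds + \sigma^P\,dW_s$, the drift part of $dv(s)$ collects $u_{xt} + \bv^* u_{xx} + b^S u_{xq} + b^P u_{xw} + \tfrac12(\sigma^S)^2 u_{xqq} + \sigma^S\sigma^P u_{xqw} + \tfrac12(\sigma^P)^2 u_{xww}$. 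The claim is that this entire drift vanishes. This is precisely the $x$-derivative of the Hamilton--Jacobi equation: differentiating $-u_t + \tfrac{1}{2c}(w+u_x)^2 - b^P u_w - b^S u_q - \tfrac12(\sigma^P)^2 u_{ww} - \tfrac12(\sigma^S)^2 u_{qq} - \sigma^P\sigma^S u_{wq} = 0$ in $x$ gives exactly $-u_{tx} + \tfrac{1}{c}(w+u_x)u_{xx} - b^P u_{wx} - b^S u_{qx} - \tfrac12(\sigma^P)^2 u_{wwx} - \tfrac12(\sigma^S)^2 u_{qqx} - \sigma^P\sigma^S u_{wqx} = 0$, and since $\bv^* = -\tfrac{1}{c}(w+u_x)$, the quadratic term becomes $-\bv^* u_{xx}$, matching the It\^o drift up to sign. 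Thus the $ds$ terms cancel identically.

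The Brownian part of $dv(s)$ is $(u_{xq}\sigma^S + u_{xw}\sigma^P)\,dW_s$, coming from the $dW_s$ contributions in $dQ_s$ and $d\varpi_s$ (note $\bX^*$ contributes nothing since its $\sigma$-component is zero, the first entry of $\bsigma$). Integrating this identity against $\bar m(dx)$ and using the representation $\Pi_t = \int_{\Rr} u_x(\bX_t^*, Q_t, \varpi_t, t)\,\bar m(dx)$ from the line preceding the lemma yields exactly \eqref{eq: Balance condition auxiliary term}. One technical point is that the coefficients $b^S, b^P, \sigma^S, \sigma^P$ are evaluated at $(Q_s, \varpi_s, s)$ and do not depend on $x$, so they pass outside the $\bar m$-integral cleanly, while the derivatives of $u$ remain inside.

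The main obstacle I anticipate is purely the bookkeeping of the second-order cross terms and confirming that the drift cancellation uses exactly the $x$-differentiated Hamilton--Jacobi equation rather than the equation itself. The regularity hypothesis $u \in C^3$ is what makes this legitimate: applying It\^o to $u_x$ requires $u_x \in C^{2}$ in space and $C^1$ in time, so third-order spatial derivatives of $u$ appear and must be continuous, which is guaranteed. I would also need to justify differentiating under the $\bar m$-integral sign and interchanging It\^o integration with the $\bar m$-integration, which follows from the compact support of $\bar m$ and the smoothness and local boundedness of the integrands; this is routine but worth noting. No genuinely hard estimate is involved—the content is the algebraic identity that the first-order transport structure of the optimal flow annihilates the drift of $u_x$.
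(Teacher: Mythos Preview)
Your proposal is correct and follows essentially the same approach as the paper: apply It\^o's formula to $u_x(\bX^*_t,Q_t,\varpi_t,t)$, differentiate the Hamilton--Jacobi equation in $x$, and observe that the drift terms cancel because $\bv^* = -\tfrac{1}{c}(\varpi_t + u_x)$, leaving only the Brownian term $(u_{xq}\sigma^S + u_{xw}\sigma^P)\,dW_t$, which one then integrates against $\bar m$. The paper's proof is slightly terser but the logic is identical.
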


\begin{proof}
By Itô's formula, the process $t\mapsto u_x(\bX^*_t,Q_t,\varpi_t,t)$ solves
\begin{align}\label{eq: Ito diff quadratic}
&d\big(u_x(\bX^*_t,Q_t,\varpi_t,t)\Big)\nonumber
\\
&= \Big( u_{xt} + u_{xx}\bv^* + u_{xq} b^S + u_{xw} b^P + u_{xqq} \tfrac{1}{2} (\sigma^S)^2 + u_{xqw} \sigma^S \sigma^P + u_{xww} \tfrac{1}{2} (\sigma^P)^2 \Big) dt +\nonumber \\
&+ \Big( u_{xq}\sigma^S + u_{xw} \sigma^P \Big) dW_t,
\end{align}
with $\bv^*(t)=-\tfrac{1}{c}(\varpi_t+u_x(\bX^*_t,Q_t,\varpi_t,t))$. By differentiating the Hamilton-Jacobi equation, we get 
\begin{equation*}
-u_{tx}+\tfrac{1}{c}(\varpi_t+u_x)u_{xx} -b^Pu_{wx} -b^S u_{qx} - \tfrac{(\sigma^P)^2}{2}u_{wwx}- \tfrac{(\sigma^S)^2}{2}u_{qqx}- \sigma^P\sigma^Su_{wqx}=0.
\end{equation*}
Substituting the previous expression in \eqref{eq: Ito diff quadratic}, we have
\begin{align*}
&d\Big( \int_{\Rr} u_x(\bX^*_t(x,\bar{q},\bar{w}),Q_t(\bar{q},\bar{w}),\varpi_t(\bar{q},\bar{w}),t)\bar{m}(dx)\Big)
\\
&= \int_{\Rr} \left(\tfrac{1}{c}(\varpi_t+u_x)u_{xx} +u_{xx} \bv^* \right)\bar{m}(dx)dt +\int_{\Rr} \left(u_{xq} \sigma^S + u_{xw} \sigma^P  \right)\bar{m}(dx)dW_t.
\end{align*}
The preceding identity simplifies to
\begin{gather*}
\int_{\Rr} \left(u_{xq} \sigma^S + u_{xw} \sigma^P  \right)\bar{m}(dx)dW_t.\qedhere
\end{gather*}
\end{proof}
Using Lemma \ref{lem: balance}, we have
\[-c dQ_t 
=\int_\Rr\left(u_{xq} \sigma^S  + u_{xw} \sigma^P \right)\bar{m}(dx)dW_t +d\varpi_t;\]
that is,
\begin{gather*}
-cb^S dt - c\sigma^S  dW_t =  \left( \sigma^S \int_\Rr u_{xq}\bar{m}(dx)   +  \sigma^P \int_\Rr u_{xw}\bar{m}(dx) \right)dW_t  + d\varpi_t\\
=b^P dt +  \left(\sigma^S \int_\Rr u_{xq}\bar{m}(dx)  + \sigma^P \int_\Rr u_{xw}\bar{m}(dx)  + \sigma^P\right)dW_t.
\end{gather*}
Thus,
\begin{gather}\label{eq: price coefficients no potential}
b^P=-cb^S,  \\
\sigma^P =-\sigma^S \dfrac{c+ \int_\Rr u_{xq}\bar{m}(dx)}{1+ \int_\Rr u_{xw}\bar{m}(dx)}.\nonumber
\end{gather}
\subsection{Quadratic solutions to the Hamilton Jacobi equation}\label{subsection: quadratic solutions}
If $u$ is a second-degree polynomial with time-dependent coefficients, then 
\[\int_\Rr u_{xq}(\bX_t^*(x,\bar{q},\bar{w}),Q_t(\bar{q},\bar{w}),\varpi_t(\bar{q},\bar{w}),t)\bar{m}(dx)\]
and
\[\int_\Rr u_{xw}(\bX_t^*(x,\bar{q},\bar{w}),Q_t(\bar{q},\bar{w}),\varpi_t(\bar{q},\bar{w}),t)\bar{m}(dx)\]
 are deterministic functions of time. Accordingly, $b^P$ and $\sigma^P$ are given in feedback form by \eqref{eq: price coefficients no potential}, thus, consistent with the original assumption. Here, we investigate the linear-quadratic case that admits solutions of this form.\\
Now, we assume that the dynamics are affine; that is,
\begin{equation}\label{eq: affine dynamics supply price}
        \begin{cases}
                b^P(t,q,w)=b_0^P(t)+q b_1^P(t)+w b_2^P(t)\\
                b^S(t,q,w)=b_0^S(t)+q b_1^S(t)+w b_2^S(t)\\
                \sigma^P(t,q,w)=\sigma_0^P(t)+q \sigma _1^P(t)+w \sigma _2^P(t)\\
                \sigma^S(t,q,w)=\sigma_0^S(t) +q \sigma _1^S(t)+w \sigma _2^S(t).
        \end{cases}
\end{equation}
Then, \eqref{eq: price coefficients no potential} gives
\begin{align*}
        b^P_0&=-cb_0^S  , &\sigma^P_0 &=-\sigma_0^S \dfrac{c+  \int_\Rr u_{xq}\bar{m}(dx)}{1+  \int_\Rr u_{xw}\bar{m}(dx)}\\
        b^P_1&=-cb_1^S  , &\sigma^P_1 &=-\sigma_1^S \dfrac{c+ \int_\Rr u_{xq}\bar{m}(dx)}{1+  \int_\Rr u_{xw}\bar{m}(dx)}\\
        b^P_2&=-cb_2^S  , &\sigma^P_2 &=-\sigma_2^S \dfrac{c+ \int_\Rr u_{xq}\bar{m}(dx)}{1+ \int_\Rr u_{xw}\bar{m}(dx)}.
\end{align*}
Because all the terms in the Hamilton Jacobi equation 
are at most quadratic, we seek for solutions of the form 
\begin{align*}
u(t,x,q,w) = & a_0(t) + a_1^1(t)x + a_1^2(t)q + a_1^3(t)w\\
& + a_2^1(t)x^2 + a_2^2(t)xq + a_2^3(t)xw + a_2^4(t)q^2 + a_2^5(t)qw + a_2^6(t)w^2,
\end{align*}
where $a_i^j:[0,T]\to \Rr$. Therefore, the previous identities reduce to
\begin{align}\label{eq: price coefficients no potential u cuadratic}
        b^P_0&=-cb_0^S  , &\sigma^P_0 &=-\sigma_0^S \dfrac{c+ a_2^2 }{1+ a_2^3} \nonumber \\
        b^P_1&=-cb_1^S  , &\sigma^P_1 &=-\sigma_1^S \dfrac{c+ a_2^2 }{1+ a_2^3} \nonumber\\
        b^P_2&=-cb_2^S  , &\sigma^P_2 &=-\sigma_2^S \dfrac{c+ a_2^2 }{1+ a_2^3}.
\end{align}
Using \eqref{eq: price coefficients no potential u cuadratic} and grouping coefficients in the Hamilton Jacobi PDE, we obtain the following ODE system
\begin{align*}
 \dot{a}_2^1=&\tfrac{2 \left(a_2^1\right)^2}{c} 
 \\ 
  \dot{a}_2^2=&\tfrac{c^2 a_2^3 b_1^S-c a_2^2 b_1^S+2 a_2^1 a_2^2}{c} 
  \\ 
 \dot{a}_2^3=&\tfrac{c^2 a_2^3 b_2^S-c a_2^2 b_2^S+2 a_2^1+2 a_2^1 a_2^3}{c} 
 \\ 
 \dot{a}_1^1=&\tfrac{c^2 a_2^3 b_0^S-c a_2^2 b_0^S+2 a_1^1 a_2^1}{c} 
 \\
 \dot{a}_2^4=&c a_2^5 b_1^S-2 a_2^4 b_1^S+\tfrac{a_2^5 \left(a_2^2+c\right) \left(\sigma _1^S\right)^2}{a_2^3+1}+\tfrac{1}{4} \left(-\tfrac{4 a_2^6 \left(a_2^2+c\right)^2 \left(\sigma _1^S\right)^2}{\left(a_2^3+1\right)^2}-4 a_2^4 \left(\sigma _1^S\right)^2\right)+\tfrac{\left(a_2^2\right)^2}{2 c} 
 \\ 
 \dot{a}_2^5=&2 c a_2^6 b_1^S+c a_2^5 b_2^S-a_2^5 b_1^S-2 a_2^4 b_2^S+\tfrac{1}{2} \left(-\tfrac{4 a_2^6 \left(a_2^2+c\right)^2 \sigma _1^S \sigma _2^S}{\left(a_2^3+1\right)^2}-4 a_2^4 \sigma _1^S \sigma _2^S\right)
 \\&+\tfrac{2 a_2^5 \left(a_2^2+c\right) \sigma _1^S \sigma _2^S}{a_2^3+1}+\tfrac{a_2^2 \left(a_2^3+1\right)}{c} \\ 
 \dot{a}_2^6=&2 c a_2^6 b_2^S-a_2^5 b_2^S+\tfrac{1}{4} \left(-\tfrac{4 a_2^6 \left(a_2^2+c\right)^2 \left(\sigma _2^S\right)^2}{\left(a_2^3+1\right)^2}-4 a_2^4 \left(\sigma _2^S\right)^2\right)
 \\
 &+\tfrac{a_2^5 \left(a_2^2+c\right) \left(\sigma _2^S\right)^2}{a_2^3+1}+\tfrac{\left(a_2^3+1\right)^2}{2 c}
 \\  
 \dot{a}_0=&c a_1^3 b_0^S-a_1^2 b_0^S+\tfrac{a_2^5 \left(a_2^2+c\right) \left(\sigma _0^S\right)^2}{a_2^3+1}+\tfrac{1}{2} \left(-\tfrac{2 a_2^6 \left(a_2^2+c\right)^2 \left(\sigma _0^S\right)^2}{\left(a_2^3+1\right)^2}-2 a_2^4 \left(\sigma _0^S\right)^2\right)+\tfrac{\left(a_1^1\right)^2}{2 c} 
 \\ 
 \dot{a}_1^2=&c a_2^5 b_0^S+c a_1^3 b_1^S-2 a_2^4 b_0^S-a_1^2 b_1^S+\tfrac{2 a_2^5 \left(a_2^2+c\right) \sigma _0^S \sigma _1^S}{a_2^3+1}
 \\
 &+\tfrac{1}{2} \left(-\tfrac{4 a_2^6 \left(a_2^2+c\right)^2 \sigma _0^S \sigma _1^S}{\left(a_2^3+1\right)^2}-4 a_2^4 \sigma _0^S \sigma _1^S\right)+\tfrac{a_1^1 a_2^2}{c} 
 \\ 
 \dot{a}_1^3=&2 c a_2^6 b_0^S+c a_1^3 b_2^S-a_2^5 b_0^S-a_1^2 b_2^S+\tfrac{1}{2} \left(-\tfrac{4 a_2^6 \left(a_2^2+c\right)^2 \sigma _0^S \sigma _2^S}{\left(a_2^3+1\right)^2}-4 a_2^4 \sigma _0^S \sigma _2^S\right)
 \\
 &+\tfrac{2 a_2^5 \left(a_2^2+c\right) \sigma _0^S \sigma _2^S}{a_2^3+1}+\tfrac{a_1^1 \left(a_2^3+1\right)}{c} ,
\end{align*}
with terminal conditions
\begin{align*}
        a_0(T)&=\Psi(0,0,0)=c_0 &  a_1^1(T)&=D_{x}\Psi(0,0,0)=c_1^1\\
        a_1^2(T)&=D_{q}\Psi(0,0,0)=c_1^2 &       a_1^3(T)&=D_{w}\Psi(0,0,0)=c_1^3\\
        a_2^1(T)&=\tfrac{1}{2} D_{xx}\Psi(0,0,0)=c_2^1 &        a_2^2(T)&=D_{xq}\Psi(0,0,0)=c_2^2\\
        a_2^3(T)&=D_{xw}\Psi(0,0,0)=c_2^3 &      a_2^4(T)&=\tfrac{1}{2} D_{qq}\Psi(0,0,0)=c_2^4\\
        a_2^5(T)&=D_{qw}\Psi(0,0,0)=c_2^5 &       a_2^6(T)&=\tfrac{1}{2} D_{ww}\Psi(0,0,0)=c_2^6.
\end{align*}
While this system has a complex structure, it admits some simplifications. For example, the equation for $a_2^1$ is independent of other terms and has the solution 
\[a_2^1(t)= \tfrac{c c_2^1}{c+2c_2^1(T-t)}.\]
Moreover, we can determine $a_2^2$ and $a_2^3$ from the linear system
\[
        \tfrac{d}{dt} \begin{bmatrix} a_2^2\\ a_2^3 \end{bmatrix} = \begin{bmatrix} -b_1^S + \tfrac{2}{c} a_2^1 & c b_1^S\\ -b_2^S & c b_2^S +\tfrac{2}{c}a_2^1 \end{bmatrix}   \begin{bmatrix} a_2^2\\ a_2^3 \end{bmatrix} + \begin{bmatrix} 0\\ \tfrac{2}{c}a_2^1 \end{bmatrix}.
\]
Lemma \ref{lem: balance} takes the form
\begin{gather*}
d\Pi_t= \Big(a_2^2(t) \sigma^S(Q_t,\varpi_t,t) + a_2^3 (t)\sigma^P (Q_t,\varpi_t,t) \Big)dW_t.
\end{gather*}
Therefore,
\begin{gather*}
\Pi_t=\Pi_0 + \int_0^t  \Big(a_2^2(r) \sigma^S(Q_r,\varpi_r,r) + a_2^3 (r)\sigma^P (Q_r,\varpi_r,r) \Big)dW_r
\end{gather*}
where
\[\Pi_0=
a_1^1(0)+2a_2^1(0)\int_{\Rr}x\bar{m}(dx)+a_2^2(0)\bar{q}+a_2^3(0)\bar{w}.\]
Replacing the above in the balance condition at the initial time, that is $\bar{w}=-c\bar{q}-\Pi_0$, we obtain the initial condition for the price 
\begin{equation}\label{eq: price initial condition}
\bar{w}=\tfrac{-1}{1+a_2^3(0)}\Big(a_1^1(0) + 2a_2^1(0) \int_{\Rr}x\bar{m}(dx) + (a_2^2(0)+c) \bar{q}\Big).
\end{equation}
where $a_1^1$ can be obtained after solving for $a_2^1$, $a_2^2$ and $a_2^3$.\\
Now, we proceed with the price dynamics using the balance condition. Under linear dynamics, we have
\begin{align*}
Q_t&=-\tfrac{1}{c} \left(\varpi_t+\Pi_0\right)
\\
& -\tfrac{1}{c} \int_0^t  a_2^2(r) \Big(\sigma_0^S(r) +Q_r \sigma _1^S(r)+\varpi_r \sigma _2^S(r) \Big) + a_2^3 (r)\Big(\sigma_0^P(r)+Q_r \sigma _1^P(r)+\varpi_r \sigma _2^P(r) \Big) dW_r.
\end{align*}
Thus, replacing the price coefficients for \eqref{eq: price coefficients no potential u cuadratic}, we obtain
\begin{align*}
d\varpi_t=&-c\left( b_0^S(t)+ b_1^S(t)Q_t+ b_2^S(t)\varpi_t\right) dt \\
&- \tfrac{c+a_2^2(t)}{1+a_2^3(t)} \left(\sigma_0^S(t)+\sigma_1^S(t) Q_t+\sigma_2^S(t) \varpi_t\right)dW_t,\\
dQ_t=&b^S dt + \sigma^S dW_t,
\end{align*}
which determines the dynamics for the price.

\section{Simulation results}\label{section: simulation}
In this section, we consider the running cost corresponding to $c=1$; that is,
\[L(\bv)=\tfrac{1}{2} \bv^2\]
and terminal cost at time $T=1$ 
\[\Psi(x)=(x-\alpha)^2.\]
We take $\bar{m}$ to be a normal standard distribution; that is, with zero-mean and unit variance. We assume the dynamics for the normalized supply is mean-reverting
\[ dQ_t = (1-Q_t) dt + Q_t dW_t,\]
with initial condition $\bar{q}=1$. Therefore, the dynamics for the price becomes
\[ d\varpi_t = -(1-Q_t) dt - \tfrac{1+a_2^2}{1+a_2^3} Q_t dW_t,\]
with initial condition $\bar{w}$ given by \eqref{eq: price initial condition}, and $a_2^2$ and $a_2^3$ solve
\begin{align*}
  \dot{a}_2^2=& -a_2^3 + a_2^2 (1+2 a_2^1)
  \\ 
 \dot{a}_2^3=& 2 a_2^1(1+a_2^3),
\end{align*}
with terminal conditions $a_2^2(1)=0$ and $a_2^3(1)=0$. We observe that the coefficient multiplying $Q_t$ in the volatility of the price is now time-dependent.
\begin{figure}[h]
	\includegraphics[scale=1]{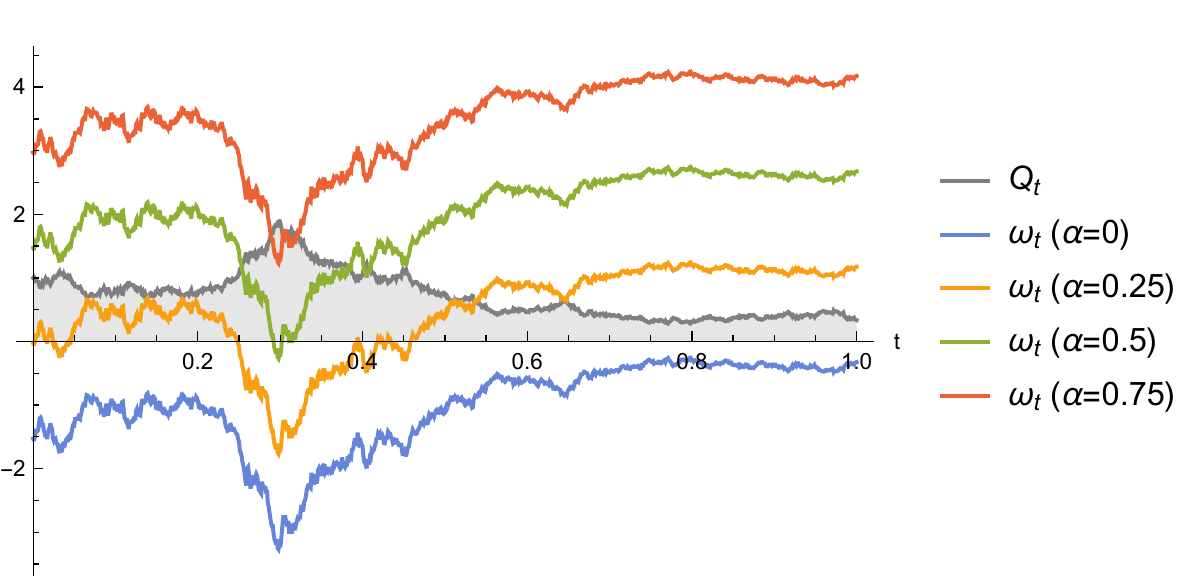}
	\caption{Supply vs. Price for the values $\alpha=0$, $\alpha=0.1$, $\alpha=0.25$, $\alpha=0.5$}
\label{fig:plots}
\end{figure}
For a fixed simulation of the supply, we compute the price for different values of $\alpha$. Agents begin with zero energy average. The results are displayed in Figure \ref{fig:plots}. As expected, the price is negatively correlated with the supply. Moreover, as the storage target increases, prices increase, which reflects the competition between agents who, on average, want to increase their storage.



\bibliographystyle{plain}
\IfFileExists{"/Users/gomesd/mfgDGOFFICE.bib"}{
	\bibliography{/Users/gomesd/mfgDGOFFICE.bib}}{\bibliography{mfg.bib}}

\end{document}